\documentclass[a4paper]{amsart}
\usepackage{amsmath}\usepackage{amsthm}
\usepackage{latexsym}
\usepackage[psamsfonts]{amssymb}
\usepackage[colorlinks=true,linkcolor=black,citecolor=black]{hyperref}
\usepackage{enumitem}
 \textwidth=5in   

\newtheorem{thm}{Theorem} 
\newtheorem{theorem}{Theorem}[section]
\newtheorem{prop}[theorem]{Proposition}
\newtheorem{lemma}[theorem]{Lemma}
\newtheorem{cor}[theorem]{Corollary}

\newtheorem*{lemma*}{Lemma}
\theoremstyle{definition}
\newtheorem{hypo}[theorem]{Assumption}
\newtheorem*{hypo*}{Assumption}
\newtheorem{remark}[theorem]{Remark}
\newtheorem*{example*}{Example}
\newtheorem*{notation*}{Notation}
\newtheorem*{remark*}{Remark}

\numberwithin{equation}{section}

\newcommand{\comment}[1]{}
\newcommand{\la}{\,\langle\,}
\newcommand{\ra}{\,\rangle\,}
\newcommand{\lla}{\,\langle\!\langle\,}
\newcommand{\rra}{\,\rangle\!\rangle\,}
\newcommand{\ov}[1]{\overline{#1}}

\def\ind{\mathrm{Ind}}
\def\im{\operatorname{Im}}  \def\tr{\operatorname{Tr}}
\def\ker{\operatorname{Ker}} \def\stab{\operatorname{Stab }}
\def\sgn{\operatorname{sgn}}  
\def\sym{\operatorname{Sym}}\def\Symb{\operatorname{Symb}}
 \def\Q{\mathbb{Q}}\def\R{\mathbb{R}}\def\Z{\mathbb{Z}}\def\C{\mathbb{C}}
\def\le{\leqslant} \def\ge{\geqslant}
\def\SL{\mathrm{SL}} \def\PSL{\mathrm{PSL}}\def\GL{\mathrm{GL}}\def\PGL{\mathrm{PGL}}
 \def\V{\mathcal{V}}\def\W{\mathcal{W}}\def\H{\mathcal{H}}

\def\Sc{\mathcal{S}}  \def\M{\mathcal{M}}\def\RR{\mathcal{R}}
\def\X{X}
  \def\cc{\mathcal{C}}
\def\e{\varepsilon} \def\DD{\Delta} \def\G{\Gamma}\def\SS{\Sigma}\def\D{\mathcal{D}}
\def\dd{\delta} \def\ss{\sigma}
\def\wT{\widetilde{T}}\def\wc{\widetilde{\chi}}\def\wV{\widetilde{V}}
\def\g{\gamma}

\def\+{\,+\,}   

\def\sm#1#2#3#4{\left(\begin{smallmatrix}#1&#2 \\ #3 & #4 \end{smallmatrix}\right)}

\def\be{\begin{equation}}  \def\ee{\end{equation}}

\def\vp{\varphi}\def\wg{\widetilde{\Gamma}}
\def\bsh{\backslash}
\def\rar{\rightarrow}

\title[Trace formula for Hecke operators]{On the trace formula for Hecke operators\\ on congruence subgroups}

\author{Alexandru A. Popa}

\address{Institute of Mathematics ``Simion Stoilow" of the Romanian Academy,
P.O. Box 1-764, RO-014700 Bucharest, Romania}
\address{E-mail: aapopa@gmail.com}
\keywords{Trace formula; Hecke operators; holomorphic modular forms; period 
polynomials}
\subjclass[2010]{11F11, 11F25, 11F67}

\begin{document}

\maketitle
\begin{center}
\emph{Dedicated to Don Zagier on the occasion of his 65th birthday.}
\end{center}

\begin{abstract}
We give a new, simple proof of the trace formula for Hecke operators on 
modular forms for finite index subgroups of the modular group. The proof 
uses algebraic properties of certain universal Hecke operators acting on 
period polynomials of modular forms, and it generalizes an approach developed by
Don Zagier and the author for the modular group. This approach leads to a 
very simple formula for the trace on the space of cusp forms plus the trace on the 
space of modular forms. As applications, we investigate what happens when
varying the weight or the level in the trace formula. 
\end{abstract}


\section{Introduction and statement of results}\label{sec1}

Let $\G$ be a finite index subgroup of $\G_1=\SL_2(\Z)$, and let $\chi$ 
be a character of $\G$ with kernel of finite index in~$\G$. We denote by
$M_k(\G,\chi)$, $S_k(\G,\chi)$ the spaces of 
modular forms, respectively cusp forms for~$\G$ of weight~$k\ge 2$ 
and Nebentypus $\chi$. For $\SS$ a double coset of $\G$ inside 
its commensurator, we denote by~$[\SS]$ the 
associated operator acting on modular forms. In this paper, we give 
a simple formula for the combination of traces
  \be\label{0} \tr([\SS], M_k(\G,\chi)+S_k^c(\G,\chi))\, := \, 
  \tr([\SS], M_k(\G,\chi))\+\tr([\SS], S_k^c(\G,\chi))\;, \ee
under the assumption $|\G\backslash\SS|=|\G_1\backslash\G_1\SS|$, 
where $S_k^c(\G,\chi)$ denotes the space of anti-holomorphic cusp forms.

The proof is entirely algebraic, and it 
is based on the fact that~\eqref{0} is the trace of a universal Hecke element 
acting on the space of (vector) 
period polynomials associated to modular forms. For the full modular group, a 
method for computing the trace of this Hecke operator was 
sketched by Don Zagier more than 20 years ago~\cite{Z}. We sharpen 
this approach in an upcoming joint work~\cite{PZ}, whose main result, 
Theorem~\ref{T2.1} below, we take for granted in this paper. 
It is surprising that the same Hecke element--which is independent of 
the weight, congruence subgroup, Nebentypus 
and double coset--is the key to proving the trace formula for an arbitrary congruence
subgroup.

Compared to the full modular group case, in this paper we take a novel 
point of view and obtain a general trace formula for double coset operators 
acting on the period subspace of  
all~$\G_1$-modules~$\V$ that admit a $\G_1$-invariant, nondegenerate pairing 
(Theorem~\ref{Tcoh}). The period subspace is closely related to 
the parabolic cohomology group $H_P^1(\G_1, \V)$, and 
our results are then obtained by taking~$\V$ to be the module
induced from the $\G$-module~$\sym^{k-2} \C^2$, twisted by $\chi$, and using the 
Eichler-Shimura isomorphism and the Shapiro lemma, which are reviewed 
in Section~\ref{s2.0}. We also use the theory of period polynomials for 
finite index subgroups developed together with Vicen\c{t}iu Pa\c{s}ol 
in~\cite{PP}, and formulated in a more general context in Sections~\ref{s2} 
and~\ref{s2.0} below. Modulo this background material, the proof of the 
trace formula for modular forms is an immediate application of the cohomological 
trace formula in Theorem~\ref{Tcoh}, and it is given in Section~\ref{s2.1}. 

In a special case, Theorem~\ref{Tcoh} can be stated as an ``Euler-Poincar\'e trace 
formula'' that may hold in much greater generality than proved here. Taking $V$ to 
be the $\G$-module $\sym^{k-2} \C^2$ twisted by $\chi$ and using the Shapiro
lemma, Theorem~\ref{Tcoh} yields: 
\be\label{1}
\sum_{i} (-1)^{i+1}\tr([\SS], H^i(\G,V)) \,=\,\sum_{X\subset \ov{\SS}} \e_\G(X)\cdot \tr(M_X, V)\,,
\ee
where the sum on the right is over conjugacy classes $X$ in the projectivization 
$\ov{\SS}$ of $\SS$ with representatives $M_X\in \SS$, and 
$\e_\G(X)$ are simple conjugacy class invariants defined in~\eqref{1.50} below.  
The cohomology groups $H^i(\G,V)$ are nontrivial for $i=0,1$ in our situation
since $\G$ has cusps, but the formula makes sense for arbitrary Fuchsian groups
of the first kind and modules $V$, and even for higher rank groups for 
appropriately defined coefficients $\e_\G(X)$. This raises the question whether a cohomological 
approach exists in much greater generality than proved in this paper, leading 
to the trace formula~\eqref{1}. We hope to return to this 
question in future work. 

The resulting trace formula can be easily applied to investigate what happens 
when varying the weight or the level--see Theorems~\ref{T2} and~\ref{T3}. We give
here two examples.  Let $\SS=\G$ be the trivial double coset, and  
assume that~$\G$ is a finite index subgroup of $\G_1$ with no elliptic elements. 
Only the conjugacy class of the identity contributes 
in formula~\eqref{TFF}, and we obtain immediately the dimension formula:
\[\dim M_k(\G,\chi)+\dim S_k(\G,\chi) = \frac{k-1}{6}\cdot [\ov{\G}_1: \ov{\G}]+
\delta_{k,2},\]
assuming $\chi(-1)=(-1)^k$ if $-I\in \G$. For example if $\G=\G_0(11)$ and $k\ne 2$,
the sum of dimensions 
above equals $2(k-1)$ for all 5 characters $\chi$ with $\chi(-1)=(-1)^k$. 
The dimension formula for the cuspidal subspace also follows 
from the Riemann-Roch theorem~\cite{Sh}, but we obtain the formula above directly, 
without needing to compute the number of cusps and the genus of the associated
modular surface. This is consistent with one theme of this paper, that
the simplest formulas hold for the linear combination of traces~\eqref{0}.

As a generalization of the dimension formula, let
$T_\ell$ be the Hecke operator for a prime $\ell$ for the principal congruence subgroup $\G_n$ of 
level $n$. Then Theorem~\ref{T3}  immediately gives:
\be\label{3}
\lim_{\substack{n\rar \infty\\(n,\ell(\ell-1))=1 }}
\frac{\tr (T_\ell, M_k(\G_n)+S_k(\G_n) )}{\varphi(n)}=\frac{\ell^{k-1}-1}2.
\ee
A similar limit for $\G_0(N)$ in the level and weight aspect has been 
used by Serre to determine the distribution of the eigenvalue of a 
fixed $T_\ell$ of Hecke eigenforms of varying weight and level~\cite{S}. 
It also answers a question posed by Florin R\u{a}dulescu, who considers similar 
limits over a sequence of subgroups approaching the identity in ~\cite[Cor. 4]{R}. 
Similar limits for $\tr(T_n, S_k(\G_1(N)))$ as $N\rar \infty$ are computed in the 
sequel to this paper~\cite{P}, and we find that they are independent of 
$n$ and $k$ as long as $(N,n-1)$. 

Our approach for proving the trace formula is related to the theory of modular symbols. 
The Hecke operators acting on period polynomials are adjoints of the Hecke operators 
on modular symbols introduced by Merel~\cite{Me}, and our approach may also be interpreted 
as computing the trace of Hecke operators on the space of modular symbols. 

One could also apply our method to the module $\sym^{k-2} \C^2\otimes \Psi$, 
for a finite dimensional representation~$\Psi$ of~$\G$, obtaining trace 
formulae for vector valued modular forms, but for simplicity we restrict 
ourselves to classical modular forms. Since we only use the structure 
of~$\PSL_2(\Z)$ as a free group with two elliptic generators and having one cusp, 
the same method easily applies to prove trace formulas for other 
Hecke groups.  

There is a vast literature on the trace formula for Hecke operators for 
congruence subgroups, and previous authors compute the trace on 
the cuspidal subspace alone. One insight of the present paper which is apparent
in the dimension formula above, and was first observed by
Zagier~\cite{Z1,Z} in the case of $\SL_2(\Z)$, is that 
the trace formula is much simpler for the linear 
combination~\eqref{0}. From our point of view this is reflected in the 
fact that formula~\eqref{1} involves only scalar, elliptic and split 
hyperbolic conjugacy classes, those for which $\e_\G(X)\ne 0$. 
One can also extract the cuspidal contribution from~\eqref{0} by computing 
the trace on the Eisenstein part, but since it is rather technical and 
would double the size of this paper, we leave this computation to the 
sequel~\cite{P}.
As a consequence of the main result of this paper, there we obtain 
explicit formulas in terms of class numbers for the trace of a composition of 
Hecke and Atkin-Lehner operators for~$\G_0(N)$. The formulas
obtained are among the simplest in the literature, and unlike in previous 
work, we need no restrictions on the index of the operators involved. We 
refer to~\cite{P} for a bibliography of previous results on the trace formula.

In the remainder of the introduction, we state the main theorem and give two 
applications. 

\subsection{Statement of results}
We start with some definition and notations in use throughout the paper.

The action of the double coset operator $[\SS]$ on $M_k(\G,\chi)$ 
is defined using a multiplicative function $\wc$ on the semigroup generated 
by~$\G$ and~$\SS$ inside the commensurator~$\wg$, such that $\wc|_{\G}=\chi^{-1}$, namely
 \be\label{chi} \wc(\g\ss\g')=\chi^{-1}(\g\g')\wc(\ss)\;, \quad \text{ for all } \g\in\G, \ss\in\SS \;. \ee  
A modular form $f\in M_k(\G,\chi)$ satisfies $f|_k \g=\chi(\g) f$, and the double coset $\SS$ 
defines an operator~$[\SS]$ on $M_k(\G,\chi)$ by
  \be \label{hecke}
  f|[\SS]=\sum_{\ss\in\G\backslash\SS} \det\ss^{k-1} \cdot \wc(\ss)\cdot f|_k \ss \;, \ee
where $f|_k\g (z)=f(\g z) (c_\g z+d_\g)^{-k}$, and we write 
$\g=\sm {a_\g}{b_\g}{c_\g}{d_\g}$ throughout the paper. In \eqref{hecke} and 
in similar sums over right cosets, it is understood (and usually obvious)
that the sum is independent of the coset representatives chosen. 
\begin{example*} Let $\G$ be the congruence subgroup $\G_0(N):=\{\g\in\G_1 : N|c_\g\}$, and
let $\chi$ be a character modulo $N$ viewed as a character of 
$\G_0(N)$ by $\chi(\g)=\chi(d_\g)$. The usual Hecke operators $T_n$ on $M_k(\G,\chi)$ 
are associated to the double coset
  \be \label{delta}  \DD_n:=\{\ss\in M_2(\Z) \;:\; \det \ss=n,\ N|c_\ss,\ (a_\ss,N)=1\}\;, \ee
and $\wc(\ss)=\chi(a_\ss)$ for $\ss\in\DD_n$.   
\end{example*}
For any subset $\Sc$ of $\GL_2^+(\R)$, we denote by
$\ov{\Sc}=(\Sc\cup -\Sc) /\{\pm 1\}\subset \GL_2^+(\R)/\{\pm 1\}$ its 
projectivization. 

For a $\ov{\G}$-conjugacy class $\X\subset  \GL_2^+(\R)/\{\pm 1\}$, we let 
$M_\X\in \GL_2^+(\R) $ be the lift of any representative.
We denote by $\DD(\X)=\tr(M_\X)^2-4\det(M_\X)$ the 
discriminant of the quadratic form associated to $M_\X$, and by 
$|\stab_{\ov{\G}}M_\X|$ the (possibly infinite) cardinality of the stabilizer 
of~$M_\X$ under conjugation by~$\ov{\G}$. We introduce the conjugacy class 
invariant 
\be\label{1.50}
\e_\G(\X)=\begin{cases} \phantom{xx}
\dfrac{|\G\backslash\H| }{2 \pi} &  \text{if $M_X$ scalar,}\vspace{2mm} \\
\dfrac{\sgn \DD(\X)}{|\stab_{\ov{\G}} M_\X|} &  \text{ otherwise,}           
            \end{cases}
\ee
where $|\G\backslash\H|$ is the area of a fundamental domain for $\G$ with respect 
to the standard hyperbolic metric, and we use 
the convention that $1/\infty=0$. Any double coset $\SS\subset \wg$ 
contains only finitely many conjugacy classes~$\X$ with $\e_\G(\X)\ne 0$, 
namely the elliptic, scalar, and split hyperbolic classes (namely those that 
contain an element fixing two distinct cusps of $\G$, for which $\e_\G(X)=1$).  

Let $p_w(t,n)$  be the Gegenbauer polynomial defined by the power series 
expansion 
  \be \label{gegen}(1-tx+nx^2)^{-1}= \sum_{w\ge 0} p_w(t,n)x^w\;. \ee 
  
We can now state the main theorem of this paper. 

\begin{thm} \label{TF}
Let $\G$ be a finite index subgroup of $\G_1$, $k\ge 2$ an integer, 
$\chi$ a character of~$\G$ with kernel of finite index in~$\G$, 
and~$\SS$ a double coset of $\G$ such that $|\G\backslash\SS|=|\G_1\backslash\G_1\SS|$. 
Assuming $\chi(-1)=(-1)^k$ if $-1\in\G$, we have 
 \be \label{TFF} \begin{split}
 \tr([\SS], M_k(\G,\chi)+ S_k^c(\G,\chi)) \,=\,
 \sum_{\X\subset \ov{\SS}}p_{k-2}(\tr M_\X, \det M_\X)\;\wc(M_\X)\;\e_\G(\X)   \\
 \+ \delta_{k,2}\delta_{\chi,{\bf 1}}\; \sum_{\ss\in\G\backslash\SS} \wc(\ss)\;,
 \end{split}
 \ee
where the sum is over $\ov{\G}$-conjugacy classes $\X$ in $\ov{\SS}$ with representative $ M_\X\in \SS$. 
The symbol $\dd_{a,b}$ is 1 if $a=b$ and 0 otherwise. 
\end{thm}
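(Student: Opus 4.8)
The plan is to reduce the trace to the full modular group $\G_1$ and then run the algebraic trace computation of \cite{PZ}. Set $w=k-2$ and let $V_w^\chi\simeq\sym^w\C^2\otimes\chi$ be the $\G$-module appearing in Eichler-Shimura; the parity hypothesis $\chi(-1)=(-1)^k$ (when $-1\in\G$) is exactly what makes $V_w^\chi$ descend to a $\ov\G$-module, so that the conjugacy-class formulation in $\PGL_2^+(\R)$ is legitimate. First I would form the induced $\G_1$-module $\V=\ind_\G^{\G_1}V_w^\chi$ and verify, using the equal-index hypothesis $|\G\backslash\SS|=|\G_1\backslash\G_1\SS|$, that $\SS$ defines a universal Hecke operator on $\V$ whose action corresponds to $[\SS]$ under Shapiro's lemma. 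The role of the equal-index condition is to guarantee a common set of coset representatives for $\G\backslash\SS$ and $\G_1\backslash\G_1\SS$, so that the induced operator is assembled from the same matrices, weighted by $\wc$, that appear in \eqref{hecke}.

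Next I would combine the Eichler-Shimura isomorphism with Proposition \ref{L2.1}, which identifies the period polynomial space Hecke-equivariantly with the compactly supported cohomology $H_c^1(\G\backslash\H,\wV_w^\chi)$, and then apply Shapiro to pass to $\G_1$ with coefficients in $\V$. The reason for working with compactly supported cohomology is that it captures the cuspidal and the boundary (Eisenstein) contributions at once; together with Assumption \ref{ass3}, which furnishes an involution intertwining the holomorphic and anti-holomorphic halves of Eichler-Shimura, this shows that the trace of the universal Hecke operator on this space is exactly $\tr([\SS],S_k(\G,\chi)+M_k(\G,\chi))$, the left side of \eqref{TFF}. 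At this point the problem is purely a trace of a double coset operator on the parabolic cohomology $H_P^1(\G_1,\V)$ of the full modular group, corrected by the boundary term, which is the setting of Section \ref{s2}.

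The computational heart is the algebraic evaluation of that trace. Here I would expand the universal Hecke operator on period polynomials as a sum over representatives $\ss\in\G\backslash\SS$, group the terms by $\ov\G$-conjugacy class $\X\subset\ov\SS$, and extract the contribution of each class. Since the trace of $M\in\GL_2(\R)$ on $\sym^w\C^2$ is the Gegenbauer polynomial $p_w(\tr M,\det M)$ by \eqref{gegen}, each class contributes $p_{k-2}(\tr M_\X,\det M_\X)\,\wc(M_\X)$ times a group-theoretic weight, and identifying that weight with $\e_\G(\X)$ is the crux. I expect the elliptic and scalar classes to produce respectively the factor $\sgn\DD(\X)/|\stab_{\ov\G}M_\X|$ and the area term $|\G\backslash\H|/2\pi$, while Oesterl\'e's trichotomy \cite{O77} forces parabolic classes and hyperbolic classes sharing fixed points with $\G$ to cancel, leaving $\e_\G(\X)=1$ precisely for hyperbolic classes fixing two cusps. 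The residual term for $k=2$, $\chi={\bf 1}$ arises because $\sym^0\C^2$ is trivial, so $H^0$ no longer vanishes and the boundary of the compactly supported cohomology contributes the extra $\sum_{\ss}\wc(\ss)$.

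The hard part will be this last step, and within it the bookkeeping of boundary and stabilizer contributions. Zagier's method was built for $\G_1$ with the Eisenstein part handled separately, so the delicate points are: transferring the conjugacy decomposition from the induced module $\V$ down to $\ov\G$-conjugacy classes in $\ov\SS$ without overcounting — each $\G_1$-orbit of representatives must correspond to a single $M_\X\in\SS$, and this is exactly where the equal-index hypothesis is used decisively; controlling the sign $\sgn\DD(\X)$ together with the possibly infinite stabilizers so that the hyperbolic and parabolic contributions cancel exactly rather than merely formally; and confirming that Assumption \ref{ass3} eliminates the anti-holomorphic trace cleanly, so that the holomorphic combination $S_k+M_k$ emerges instead of $2S_k+2\ov{S_k}$ or an Eisenstein-only variant. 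Once the computation over $\G_1$ is complete, \eqref{TFF} follows.
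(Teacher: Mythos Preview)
Your high-level architecture is right and matches the paper: form the induced module, apply Shapiro and Eichler--Shimura together with Assumption~\ref{ass3} to identify $\tr([\SS],S_k+M_k)$ with the trace of a Hecke operator on the period polynomial space $W_w^{\G,\chi}\subset V_w^{\G,\chi}$ (this is Proposition~\ref{L2.1}), and then compute that trace algebraically over $\G_1$.

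Where you diverge from the paper is in the ``computational heart.'' You propose to expand over coset representatives $\ss\in\G\backslash\SS$, group by $\ov\G$-conjugacy class, and then argue case by case---elliptic, scalar, hyperbolic, parabolic---using Oesterl\'e's trichotomy to force the expected weights and cancellations. That is not how the paper proceeds, and it is not clear your plan can be made to work directly: the trace is on the \emph{period subspace} $W_w^{\G,\chi}=\ker(1+S)\cap\ker(1+U+U^2)$, not on the full module, and there is no obvious way to decompose that subspace by conjugacy classes in $\ov\SS$.

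The paper's mechanism is different and essentially uniform across conjugacy-class types. One constructs a specific element $\wT_n=\sum_M c(M)M\in\RR_n$ satisfying the three purely algebraic properties \eqref{Z1}, \eqref{Z23}, \eqref{Z4}. Property~\eqref{Z1} makes $\wT_n$ act on $W_w^{\G,\chi}$ and realize $[\SS]$ there. Property~\eqref{Z23} says $\wT_n$ swaps the subspaces $\ker(1+S)$ and $\ker(1+U+U^2)$; since these generate $V_w^{\G,\chi}$ (Lemma~\ref{L5}), the elementary identity $\tr(T,A\cap B)=\tr(T,A+B)$ of~\eqref{1.9} converts the trace on $W_w^{\G,\chi}$ into the trace on all of $V_w^{\G,\chi}$. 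Only then does the conjugacy-class decomposition enter, via property~\eqref{Z4}: the coefficients of $\wT_n$ satisfy $\sum_{M\in\X}c(M)=\e(\X)$ for every $\ov\G_1$-class $\X$, and this is what produces the weight $\e(\X)$ uniformly, with no separate treatment of elliptic versus hyperbolic versus parabolic classes. Proving~\eqref{Z4} (done in Appendix~\ref{app1}) is the genuine technical core, and your proposal does not identify it. Oesterl\'e's trichotomy is used only to \emph{interpret} $\e_\G(\X)$ in the statement, not to prove the formula.

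Finally, your account of the $k=2$, $\chi={\bf 1}$ correction is imprecise. It does not arise from an $H^0$ boundary effect in compactly supported cohomology; in the paper it comes from the failure of Lemma~\ref{L5}(a) in this single case---$\ker(1+S)+\ker(1+U+U^2)$ has codimension one in $V_0^\G$---and the resulting defect is computed in~\S\ref{secT2} and reduced to the Kronecker--Hurwitz relation~\eqref{KH}.
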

\noindent If $\G$, $\SS$ and $\chi$ are invariant under conjugation by an
order 2 element of determinant~$-1$, then we can replace the space of 
anti-holomorphic cusp forms~$S_k^c(\G,\chi)$
by $S_k(\G,\chi)$ in the theorem, as well as in all the trace formulas in 
the paper (see Remark~\ref{r3.1}).

We state the theorem under the assumptions used in the proof, but 
we expect the same formula to hold for any Fuchsian subgroup of 
the first kind~$\G$, and any double coset $\SS\subset \wg$, with the 
term on the second line multiplied by 2 if~$\G$ has no cusps. 

What we prove is an equivalent version of Theorem \ref{TF} in 
which the sum is over $\G_1$-conjugacy classes $X$, and we set
 \be\label{1.70} \e(\X):=\e_{\G_1}(\X)\;.\ee
Explicitly, if $M_X\in X$ is any representative, then $\e(X)$ is equal to: 
1/6 if $M_X$ is scalar;
$-1/|\stab_{\ov{\G}_1} M_X|$ if $M_X$ is elliptic; 1 if $M_X$ is hyperbolic 
fixing two cusps of~$\G_1$; and 0 otherwise. 
\setcounter{thm}{0}
\begin{thm}[Second version]\label{T1}
Under the assumptions in the first version, we have 
 \be \label{TF2} 
 \begin{split}
 \tr([\SS], M_k(\G,\chi)+ S_k^c(\G,\chi)) \,=\,
 \sum_{\X}p_{k-2}(\tr M_\X, \det M_\X)\;\cc_{\G,\SS}^\chi(M_\X)\; \e(\X) \\
 \+\delta_{k,2}\delta_{\chi,{\bf 1}}\; \sum_{\ss\in\G\backslash\SS} \wc(\ss)\;,
 \end{split}
 \ee
where the sum is over $\ov{\G}_1$-conjugacy classes $\X\subset \ov{\G_1\SS\G_1}$ with representative $M_\X\in \G_1\SS\G_1$, 
and\footnote{The same sign is chosen in all three places in~\eqref{1.10}. 
If $-1\notin\G$ at most one choice of signs is possible for each $A$, while 
if $-1\in \G$ both choices yield the same value for the summand.}
 \be\label{1.10} \cc_{\G,\SS}^\chi(M)\; :=\; \sum_{\substack{A\in \ov{\G}\backslash\ov{\G}_1\\ 
 \pm AMA^{-1}\in \SS}}(\pm 1)^k\wc(\pm AMA^{-1}) \;.\ee
\end{thm}
\noindent Theorem~\ref{T1}, as well as the equivalence of the two versions, 
is proved in Section~\ref{s2.1}. The coefficient 
$\cc_{\G,\SS}^\chi(M)=(-1)^k \cc_{\G,\SS}^\chi(-M)$ also depends on
the parity of $k$, but for simplicity we suppress this dependence from the notation. 

Next we give two applications of the trace formula, in which we vary the weight 
or the level. When varying the weight $k$, we obtain that the generating series of the traces of 
Hecke operators is a rational function, as observed in special cases
in~\cite{Z1,FOP}.  
\begin{thm}\label{T2}
Set ${\bf T}_{\G,\SS}^\chi(k)=\tr([\SS], M_k(\G,\chi)+S_k^c(\G,\chi) )$. 
When $-1\notin\G$, the formula in Theorem~\ref{TF} 
is equivalent to the following power series identity
\[
 \sum_{k\ge 2} {\bf T}_{\G,\SS}^\chi(k)\cdot x^{k-2}=
 \sum_{\X\subset \ov{\SS}}\frac{\wc(M_\X)\; \e_\G(\X)}{1-\tr(M_\X)x+\det(M_\X) x^2} 
  +\delta_{\chi,{\bf 1}}\sum_{\ss\in\G\backslash\SS} \wc(\ss)\;,
  \] 
where the sum is over $\G$-conjugacy class 
$\X\subset \ov{\SS}$ with representatives $M_\X\in\SS$. When $-1\in\G$ a similar 
formula holds, with the first summand in the right side replaced by 
\[\frac{\e_\G(\X)\wc(M_\X)}{2} \Big(\frac{1}{1-\tr(M_\X)x+\det(M_\X) x^2} 
+ \frac{\chi(-1) }{1+\tr(M_\X)x+\det(M_\X) x^2} \Big) .
\]
\end{thm}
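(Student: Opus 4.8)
The plan is to derive the Third Version directly from the Second Version (Theorem~\ref{T2}), since both are stated with $\setcounter{thm}{0}$ as reformulations of the same result. The essential observation is the generating-series identity~\eqref{gegen}: summing $p_{k-2}(\tr M_\X,\det M_\X)\,x^{k-2}$ over $k\ge 2$ produces exactly the rational function $(1-\tr(M_\X)x+\det(M_\X)x^2)^{-1}$. So the entire first sum in~\eqref{TFF}, term by term, becomes the corresponding first sum of rational functions once we multiply each weight-$k$ trace by $x^{k-2}$ and sum over $k$. The Kronecker-delta term $\delta_{k,2}\delta_{\chi,\bf 1}\sum_{\ss}\wc(\ss)$ survives only at $k=2$, contributing $x^0$, which matches the trailing $\delta_{\chi,\bf 1}\sum_{\ss\in\G\backslash\SS}\wc(\ss)$ in the stated formula.

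First I would fix the case $-1\notin\G$. Here the first version of Theorem~\ref{TF} is the clean form: the sum runs over $\ov{\G}$-conjugacy classes $\X\subset\ov{\SS}$ with representatives $M_\X\in\SS$, and each term carries the weight-independent factor $\wc(M_\X)\,\e_\G(\X)$. Since $\wc(M_\X)$ and $\e_\G(\X)$ do not depend on $k$, I can interchange the two (finite, for fixed $k$) sums and apply~\eqref{gegen} inside each conjugacy-class term:
\[
\sum_{k\ge 2}\Big(\sum_{\X\subset\ov{\SS}}p_{k-2}(\tr M_\X,\det M_\X)\wc(M_\X)\e_\G(\X)\Big)x^{k-2}
=\sum_{\X\subset\ov{\SS}}\frac{\wc(M_\X)\,\e_\G(\X)}{1-\tr(M_\X)x+\det(M_\X)x^2}.
\]
This is a formal power series identity, so no convergence questions arise; the coefficient of $x^{k-2}$ on both sides agrees by construction. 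Adding the $k=2$ term completes this case.

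The case $-1\in\G$ requires a small extra argument, and I expect it to be the only genuinely delicate point. When $-1\in\G$, the class $\X$ and the class $[-M_\X]$ coincide in $\ov{\SS}\subset\PGL_2^+(\R)$, and the representative $M_\X\in\SS$ is only defined up to sign. The chosen symmetrization
\[
\frac{\wc(M_\X)\,\e_\G(\X)}{2}\Big(\frac{1}{1-\tr(M_\X)x+\det(M_\X)x^2}+\frac{\chi(-1)}{1+\tr(M_\X)x+\det(M_\X)x^2}\Big)
\]
is exactly the device that makes each term independent of the sign choice: replacing $M_\X$ by $-M_\X$ sends $\tr M_\X\mapsto-\tr M_\X$ (hence swaps the two fractions) and sends $\wc(M_\X)\mapsto\wc(-M_\X)=\chi(-1)\,(-1)^{?}\wc(M_\X)$ via~\eqref{chi}, while the factor $\chi(-1)$ on the second fraction absorbs precisely this discrepancy. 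I would verify that extracting the coefficient of $x^{k-2}$ from this symmetrized fraction reproduces $\tfrac12 p_{k-2}(\tr M_\X,\det M_\X)\wc(M_\X)\e_\G(\X)(1+\chi(-1)(-1)^k)$, using that $p_{k-2}(-t,n)=(-1)^k p_{k-2}(t,n)$ from~\eqref{gegen}. Under the running hypothesis $\chi(-1)=(-1)^k$ this factor $(1+\chi(-1)(-1)^k)=2$ collapses the symmetrization back to the single term of~\eqref{TFF}, confirming consistency; conversely for weights of the wrong parity it correctly yields zero. The main obstacle is thus purely bookkeeping of signs under $M_\X\mapsto-M_\X$, ensuring that the generating series is well-defined on $\PGL_2^+(\R)$-classes and that the parity factor matches the vanishing built into~\eqref{TFF}.
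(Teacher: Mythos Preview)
Your proposal is correct and follows exactly the approach the paper indicates: the paper itself gives no proof of the Third Version beyond the sentence ``Because of~\eqref{gegen}, an equivalent formulation for all the trace formulas in this paper can be given as generating series in the weight,'' so the entire argument is the Gegenbauer generating-series identity applied termwise to~\eqref{TFF}, which is precisely what you do. Your handling of the $-1\in\G$ case---checking that the symmetrized fraction has $x^{k-2}$-coefficient $p_{k-2}(\tr M_\X,\det M_\X)\wc(M_\X)\e_\G(\X)\cdot\tfrac12(1+\chi(-1)(-1)^k)$, which equals the summand of~\eqref{TFF} when $\chi(-1)=(-1)^k$ and vanishes otherwise---is more explicit than anything in the paper and is correct (note $\wc(-M_\X)=\chi^{-1}(-1)\wc(M_\X)=\chi(-1)\wc(M_\X)$ by~\eqref{chi}, with no extra sign).
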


When varying the level, the simplest formula is obtained when $\G$ varies 
through the principal congruence subgroups 
$\G_n=\{\g\in \G_1: \g\equiv I \pmod{n}\}$. The next theorem seems to be 
the first explicit trace formula for~$\G_n$. 
\begin{thm}\label{T3} Fix $\ss=\sm \ell 001$ with $\ell>1$, and let $n>2\ell+2$ 
coprime to $\ell$. Setting $n'=n/\gcd(n,\ell-1) $, 
we have  
\[\tr ([\G_n\ss\G_n], M_k(\G_n)+S_k(\G_n) )=\frac{(\ell^{k-1}-1)}{2}\cdot
\frac{\varphi(n')\varphi_2(n)}{\varphi_2(n')}+\delta_{k,2}\varphi_1(\ell),\]
where $\varphi_1(\ell)=[\G_1:\G_0(\ell)]$, $\varphi_2(n)=[\ov{\G}_1:\ov{\G}_n]$
and $\varphi$ is Euler's phi function. 
\end{thm}
In particular, for $\ell$ prime the operator in the theorem is the usual Hecke 
operator~$T_\ell$ and we obtain the limit formula~\eqref{3}. The proof of Theorem~\ref{T3} is given 
in Section~\ref{sec5}.

\noindent {\bf Acknowledgements.}
I am grateful to Don Zagier for introducing me to the period polynomial approach 
for proving the Eichler-Selberg trace formula for $\SL_2(\Z)$, and for sharing generously his insights. 
I also thank Vicen\c{t}iu Pa\c{s}ol for many illuminating 
discussions on the subject of period polynomials and modular symbols. 

This work was partly supported 
by the European Community grant PIRG05-GA-2009-248569 and by 
the CNCS grant PN-II-RU-TE-2011-3-0259. Part of this work was completed 
during several visits at MPIM in Bonn, whose support I gratefully acknowledge.

\section{A trace formula on the parabolic cohomology of the modular group}
\label{s2}

Let $\V$ be a $\G$-module, with $\G$ denoting $\SL_2(\Z)$ in this section only. 
We give a formula for the trace of Hecke operators on the parabolic cohomology 
group $H_P^1(\G, \V)$, by relating it to the trace of a certain operator 
$\wT_n$ on the \emph{period subspace} $\W$ of $\V$ (Corollary~\ref{c2.1}). 
The trace on $\W$ is computed in Theorem~\ref{Tcoh}, using a special 
operator $\wT_n$, studied in detail in~\cite{PZ}.

\subsection{Double coset operators on cohomology}
To define the action of double coset operators on cohomology, let us consider 
more generally a group $\G$ and a 
right $\G$-module $\V$, which is assumed to be a vector space over $\C$. Let $\SS$ be a double coset of 
$\G$ contained in the commensurator of $\G$ inside 
a larger ambient group, so that the number of right cosets $|\G\backslash \SS|$ is
finite. Assume that elements in $\SS$ act on $\V$ in a way compatible with the action of $\G$, that is
  \be \label{comp} P|(gM)=(P|g)|M,\quad P|(Mg)=(P|M)|g ,\text{ for } P\in \V, g\in\G, M\in \SS \;,\ee
namely $\V$ is a module for the semigroup generated by $\G$ and $\SS$ inside the 
commensurator of $\G$. Fix representatives $M_K\in \SS$ for cosets $K\in \G\backslash \SS$, 
and for $\g\in\G$, let $\g_K\in\G$ be the unique element such that $M_K\g^{-1}=\g_K^{-1} M_{K\g^{-1}}$. 
If $\phi:\G\rightarrow \V$  is a cocycle, namely $\phi(gh)=\phi(g)|h+\phi(h)$, we define 
  \be\label{131} \phi|[\SS](\g)= \sum_{K\in\G\backslash \SS} \phi(\g_K)| M_K \;.\ee
Then $\phi|[\SS]$ is a cocycle, whose cohomology class is independent of the choice of 
representatives~$M_K$. If $\G$ is a subgroup of $\SL_2(\R)$ and $\phi$ is a 
parabolic cocycle (that is $\phi(\g)=v_\g|1-\g$ for all parabolic elements $\g$), 
so is $\phi|[\SS]$, which defines an action of $[\SS]$ on the parabolic 
cohomology group~$H_P^1(\G, \V)$~\cite{Hab,Sh}.

\begin{remark}\label{r1}
If $\G$ contains an element $J$ in the center with $J^2=1$ (e.g., $J=-1$ for $\G$ a subgroup of $\SL_2(\R)$), let 
$\V^{J}$ be the subspace of $J$-invariants in $\V$. For any
cocycle $\phi:\G\rightarrow \V$ and $\g\in G$ we have $\phi(\g)|1-J= \phi(J)|1-\g$, so $\g\mapsto \phi(\g)|1-J$
is a coboundary. Therefore $\phi'(\g)=\frac 12 \phi(\g)|(1+J)$ is a cocycle in the same class as $\phi$, which takes
values in the module $\V^J$, and the map 
$$H^1(\G, \V)\simeq H^1(\G, \V^J), \quad [\phi]\mapsto [\phi']\;,$$
is an isomorphism. The same is true for parabolic cohomology, if $\G$ is a subgroup of $\SL_2(\R)$.
Therefore we can restrict without loss of generality to modules on which $J$ acts identically, 
\end{remark}

\subsection{Hecke operators on the period subspace}\label{s2.2}
In this section we set $\G=\SL_2(\Z)$. Let $\V$ be a right $\G$-module 
on which~$-1$ acts identically, and let $\SS$ be a double coset 
of~$\G$ whose elements act on~$\V$ by an action denoted $|$, 
in a way compatible with the action of $\G$ as in \eqref{comp}. 
By linearity we also have an action of elements of $\RR_\SS=\Q[\ov{\SS}]$ on $\V$. 

Let $S=\sm 0{-1}10$ and $T=\sm 1101$ be two generators of $\G$, and let 
$U=TS$, an element of order~3 in $\ov{\G}=\PSL_2(\Z)$. To ease notation,
we use the same notation for an element in $\ov{\G}$ and for a lift of it
in $ \G$. Any parabolic cocycle 
$\varphi: \G\rightarrow \V$ can be modified by a coboundary so that $\varphi(T)=0$, and 
then the element $P=\varphi(S)=\varphi(TS)$ belongs to the subspace
  $$\W\;:=\;\{P\in\V\,:\, P|(1+S)=P|(1+U+U^2)=0 \}\,,$$
called \emph{the period subspace}. Conversely, if $P\in\W$, the map 
$\varphi_P:\G\rightarrow \V$ with $\varphi_P(T)=0$, $\varphi_P(S)=P$ extends to a 
parabolic cocycle via $\varphi_P(gh)=\varphi_P(g)|h+\varphi_P(h)$. This gives an exact sequence 
\be \label{es}
0\longrightarrow \cc \longrightarrow \W \xrightarrow{\;P\mapsto [\varphi_P]\;} H_P^1(\G, \V) \longrightarrow 0\;,
\ee
where $\cc=\{P|(1-S)\;:\; P \in \V,\ P|(1-T)=0 \}\subset \W$ is called the \emph{coboundary subspace}.   

The action of the double coset operator $[\SS]$ on cohomology was 
defined in the previous section, and now we show directly that the corresponding 
action on~$\W$ is determined by the same elements introduced by Choie and Zagier to
express the action of Hecke operators on period polynomials of modular forms. 
In each coset  $K\in \G\backslash \SS$ (identified as above with $\ov{\G}\backslash\ov{\SS}$),  
we choose a representative~$M_K$ that fixes the cusp infinity, and we let 
\be\label{2.4}
T_{\SS}^\infty=\sum_{K\in \G\backslash\SS} M_K \in \RR_{\SS} \,.
\ee
Then there exist elements $\wT_{\SS}\in \RR_{\SS}$ satisfying 
\begin{equation}\label{Z1}\tag{A}
(1-S)\wT_{\SS}-T_{\SS}^\infty(1-S) \in (1-T)\RR_{\SS}\;.
\end{equation}
This was shown in ~\cite{CZ} in the case $\SS=\M_n$, and the proof for general
$\SS$ is similar. 
\begin{prop}\label{p2.1} 
Any element $\wT_{\SS}\in \RR_{\SS}$  satisfying property~\eqref{Z1}
preserves the space~$\W$, and its action on $\W$ corresponds to the action of $[\SS]$ on $H_P^1(\G, \V)$
via the map in \eqref{es}, namely the cocycles $\varphi_P|[\SS]$ and $\varphi_{P|\wT_\SS}$ are in the same 
cohomology class, for any $P\in \W$. 
\end{prop}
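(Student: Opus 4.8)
The plan is to deduce both assertions from a single cocycle computation, using the exact sequence~\eqref{es}. Recall that every parabolic cocycle on $\G_1$ can be normalized by a coboundary so as to vanish on $T$, after which its value on $S$ lies in $\W$, and that two such $T$-normalized cocycles are cohomologous precisely when their $S$-values differ by an element of $\cc$. Since parabolicity is preserved by $[\SS_1]$, the cocycle $\psi:=\varphi_P|[\SS_1]$ is parabolic, so it suffices to show that $\psi(T)=0$ and that $\psi(S)\equiv P|\wT_n\pmod{\cc}$. Granting this, the normalized $S$-value $\psi(S)$ lies in $\W$, whence $P|\wT_n\in\W$ as well (because $\cc\subset\W$); this is exactly the statement that $\wT_n$ preserves $\W$, and the congruence then identifies the action of $\wT_n$ on $\W$ with the action of $[\SS_1]$ on $H_P^1(\G_1,\V)$.

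The first step is the vanishing $\psi(T)=0$, which I would obtain by a judicious choice of coset representatives. Every $\G_1$-coset in $\M_n$ contains an upper-triangular (that is, $\infty$-fixing) matrix, so I choose all representatives in $M_n^\infty$ upper-triangular; since the elements of $\M_n\setminus\SS_1$ annihilate $\V$, adjoining their cosets to the sum in~\eqref{131} changes nothing, and one may compute $\psi$ as though summing over $\ov{\G}_1\backslash\ov{\M}_n$. For $\g=T$, right multiplication by $T^{-1}$ permutes the $\infty$-fixing cosets, and the twisting elements $\g_K=M_{KT^{-1}}\,T\,M_K^{-1}\in\G_1$ are then products of upper-triangular matrices, hence upper-triangular elements of $\SL_2(\Z)$, i.e.\ of the form $\pm T^{j}$. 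As $\varphi_P(\pm T^{j})=0$ (this follows from $\varphi_P(T)=0$ together with the cocycle relation and the triviality of the $-1$-action), every term in~\eqref{131} vanishes and $\psi(T)=0$.

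It remains to compute the period $\psi(S)=\sum_K\varphi_P(S_K)|M_K$, with $S_K=M_{KS^{-1}}\,S\,M_K^{-1}\in\G_1$, and to match it with $P|\wT_n$ modulo $\cc$. Writing $\varphi_P(\g)=P|\beta(\g)$ for the universal cocycle $\beta\colon\G_1\to\RR_1$ determined by $\beta(T)=0$, $\beta(S)=1$ (which is legitimate on $\W$ since $P\in\W$ is annihilated by $1+S$ and $1+U+U^2$), the period becomes a single right action $\psi(S)=P|\Theta$ for the explicit element $\Theta=\sum_K\beta(S_K)\,M_K\in\RR_n$. Because $S$ interchanges the cusps $0$ and $\infty$, the boundaries $M_K\cdot 0=M_KS\cdot\infty$ are again $\infty$-fixing targets, and reorganizing the sum by right multiplication by $S$, together with the continued-fraction (Manin) decomposition of the resulting cusps, brings $\Theta$ into the two-sided form governed by the left-hand side of~\eqref{Z1}. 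Property~\eqref{Z1}, which asserts $(1-S)\wT_n-T_n^\infty(1-S)\in(1-T)\RR_n$, is then exactly what converts $\Theta$ into $\wT_n$: the factor $T_n^\infty$ records the $\infty$-fixing reorganization, the factor $(1-S)$ records the swap $0\leftrightarrow\infty$, and the residual term lying in $(1-T)\RR_n$ contributes, via $\varphi_P(T)=0$, a value of the form $v|(1-S)$ with $v|(1-T)=0$, i.e.\ an element of $\cc$. This yields $\psi(S)\equiv P|\wT_n\pmod{\cc}$, completing the reduction.

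The main obstacle is precisely the reorganization in the previous paragraph: faithfully tracking the twisting elements $S_K$ in~\eqref{131}, expressing the period as the right action of a \emph{single} element $\Theta\in\RR_n$, and matching $\Theta$ to $\wT_n$ through the two-sided ring relation~\eqref{Z1}. The delicate points are the reindexing of cosets under right multiplication by $S$ (which must reproduce the combination $T_n^\infty(1-S)$ with the correct normalization, bearing in mind that $-1$ acts trivially) and the verification that the residual $(1-T)\RR_n$-discrepancy genuinely lands in the coboundary subspace $\cc$. Everything else — the reduction via~\eqref{es}, the vanishing $\varphi_P(\pm T^{j})=0$, and the fact that $P\in\W$ is killed by $1+S$ and $1+U+U^2$ — is formal once this identification is in place.
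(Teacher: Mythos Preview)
Your treatment of $\psi(T)=0$ is fine and matches the paper. The gap is in your computation of $\psi(S)$: you introduce the element $\Theta=\sum_K\beta(S_K)M_K$ and then appeal to ``reorganization'' via continued fractions and the Manin decomposition to match $\Theta$ with $\wT_n$ through~\eqref{Z1}, but this identification is only asserted heuristically. You yourself flag it as ``the main obstacle,'' and indeed nothing in the argument establishes that $P|\Theta\equiv P|\wT_n\pmod{\cc}$. Property~\eqref{Z1} is a statement about $\wT_n$ and $T_n^\infty$; it says nothing \emph{a priori} about your $\Theta$, and the sentence ``\eqref{Z1} is then exactly what converts $\Theta$ into $\wT_n$'' is not a proof.

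The paper avoids this by reversing the direction of comparison. Instead of computing $\Theta$ and trying to reach $\wT_n$, it \emph{starts} from an arbitrary $\wT_n$ satisfying~\eqref{Z1}, writes $\wT_n=\sum_K X_K M_K$ with $X_K\in\Q[\G_1]$, and reads~\eqref{Z1} coset by coset as
\[
(1-S)X_K M_K \equiv M_K - M_{KS^{-1}}S = (1-S_K)M_K \pmod{(1-T)\RR_n},
\]
so that $1-S_K=(1-S)X_K+(1-T)Y_K$ in $\Q[\G_1]$. A short lemma (if $1-g=\sum(1-g_i)X_i$ then $\varphi(g)=\sum\varphi(g_i)|X_i$ for any cocycle) then gives $\varphi_P(S_K)=\varphi_P(S)|X_K$, since $\varphi_P(T)=0$. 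Summing over $K$ yields $\psi(S)=P|\wT_n$ \emph{exactly}, not merely modulo $\cc$; this immediately implies that $\wT_n$ preserves $\W$ and that the cocycles coincide on the nose (not just in cohomology). Your route could in principle be completed, but the paper's decomposition-and-lemma argument is both shorter and sharper.
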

We use the following easy lemma, which we state in greater generality than needed here. 
\begin{lemma*} Let $G$ be a group with generators $g_1, \ldots, g_r$, let $V$ be a right $G$-module viewed
also as a $\Q[G]$-module, and let $\varphi:G\rightarrow V$ be a cocycle. Then for each $g\in G$ there 
exist $X_i\in \Q[G]$ such that in the group algebra $\Q[G]$ we have $1-g=\sum_{i=1}^r (1-g_i) X_i$, 
and for any such $X_i$ we have 
  $$\varphi(g)=\sum_{i=1}^r\varphi(g_i)|X_i\;.$$
\end{lemma*}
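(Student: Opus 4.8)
The plan is to encode the cocycle $\varphi$ as a single linear map on the augmentation ideal of $\Q[G]$, so that both assertions become formal consequences of that map respecting the relation $1-g=\sum_i(1-g_i)X_i$.

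For existence of the $X_i$, I would write $g=s_1s_2\cdots s_m$ as a word in the generators $g_i$ and their inverses and use the telescoping identity
\[ 1-s_1\cdots s_m=\sum_{j=1}^m (1-s_j)\,s_{j+1}\cdots s_m \]
(the product being empty, hence $1$, when $j=m$), which reduces matters to the single factors $1-s_j$. Since $1-g_i^{-1}=(1-g_i)(-g_i^{-1})$, each $1-s_j$ lies in $(1-g_{i})\Q[G]$ for the appropriate index, and collecting terms by generator yields an explicit expression $1-g=\sum_{i=1}^r(1-g_i)X_i$.

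For the formula, I would introduce the augmentation ideal $I\subset\Q[G]$, the kernel of $\epsilon\colon\Q[G]\to\Q$, $\epsilon(\sum c_h h)=\sum c_h$. Because $\{1\}\cup\{1-h: h\ne 1\}$ arises from the group basis by an upper-triangular change of coordinates, the family $\{1-h: h\ne 1\}$ is a $\Q$-basis of $I$, so I may define a $\Q$-linear map $\Psi\colon I\to V$ by $\Psi(1-h)=-\varphi(h)$ (consistent with $\varphi(1)=0$). The decisive step is to check that $\Psi$ is a homomorphism of right $\Q[G]$-modules, with $V$ carrying the action $|$; it is enough to test this on a basis vector $1-a$ and a group element $b$, where $(1-a)b=(1-ab)-(1-b)$ combined with the cocycle relation $\varphi(ab)=\varphi(a)|b+\varphi(b)$ gives
\[ \Psi\big((1-a)b\big)=-\varphi(ab)+\varphi(b)=-\varphi(a)|b=\Psi(1-a)|b. \]

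To conclude, I would apply the module map $\Psi$ to $1-g=\sum_i(1-g_i)X_i$, obtaining $-\varphi(g)=\sum_i\Psi(1-g_i)|X_i=-\sum_i\varphi(g_i)|X_i$, which is exactly the claimed identity. Since $\Psi$ depends only on the element $1-g\in I$ and not on any particular decomposition, the formula holds for \emph{every} admissible family $(X_i)$, which is precisely the "for any such $X_i$" clause. I expect the lone nontrivial point to be the verification that $\Psi$ respects the module structure — a single application of the cocycle identity — while all remaining steps are purely formal.
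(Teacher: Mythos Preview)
Your proof is correct and takes a somewhat different route from the paper's. The paper argues both claims simultaneously by induction on the word length of $g$: from $1-g_i g = (1-g)+(1-g_i)g$ and $\varphi(g_i g)=\varphi(g_i)|g+\varphi(g)$ one passes from $g$ to $g_i g$, producing one explicit decomposition together with the formula for it. Your argument instead packages the cocycle as a right $\Q[G]$-module homomorphism $\Psi:I\to V$ on the augmentation ideal, so that the identity $\varphi(g)=\sum_i\varphi(g_i)|X_i$ is obtained by applying $\Psi$ to the relation $1-g=\sum_i(1-g_i)X_i$.

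The existence parts are essentially the same telescoping computation. The real difference is in the second assertion: your approach makes the ``for any such $X_i$'' clause immediate, since $\Psi$ depends only on $1-g\in I$ and not on the decomposition. The paper's induction, read literally, only verifies the formula for the particular $X_i$ produced inductively; the universal statement then rests on the (standard) identification of $1$-cocycles with module maps out of the augmentation ideal, which is precisely what you have made explicit. So your version is a bit more conceptual and self-contained on this point, at the cost of setting up $\Psi$; the paper's is shorter but leans on that identification implicitly.
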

\begin{proof}Both the existence of $X_i$ as above and the relation follow 
by induction on the length of~$g$ as a product in the generators $g_i$: 
we have $1-g_ig=1-g +(1-g_i)g$, and $\varphi(g_i g)=\varphi(g_i)|g+\varphi(g)$.
\end{proof}
\begin{proof}[Proof of Proposition \ref{p2.1}.]
We use the representatives $M_K$ in~\eqref{2.4} to define the action of $[\SS]$ on $\varphi$
in \eqref{131}. 

Let $\varphi\in Z_P^1(\G,\V)$ with $\varphi(T)=0$. Writing $M_K T^{-1}=T_{K}^{-1} M_{KT^{-1}}$, 
we have $T_K \infty=\infty$, so $\varphi(T_K)=0$, and \eqref{131} shows that $\varphi|[\SS](T)=0$.
Therefore it remains to show that $\varphi|[\SS](S)=\varphi(S)| \wT_\SS \;,$ if $\wT_\SS$ 
satisfies~\eqref{Z1}, which would show in particular that $\wT_\SS$ preserves $\W$. 

Let $\wT_\SS=\sum_{K\in \G\backslash \SS} X_K M_K $ satisfying \eqref{Z1}, with $X_K\in\Q[\G]$. 
Relation \eqref{Z1} implies
  \[(1-S)X_K M_K \equiv M_K-M_{KS^{-1}}S= (1-S_K)M_K   \pmod{(1-T)\RR_n} \;,\]
where $M_K S^{-1}= S_K^{-1} M_{KS^{-1}}$. We have therefore $1-S_K= (1-S)X_K +(1-T)Y_K$, and the lemma applied to the
group $\ov{\G}$ with generators $S$, $T$ and to the cocycle $\varphi$ with $\varphi(T)=0$ gives  
  \[\varphi(S)| \wT_\SS=\sum_{ K\in \G\backslash\SS}(\varphi(S)| X_K)| M_K
  =\sum_{K\in\G\backslash\SS} \varphi(S_K)| M_K = \varphi|[\SS](S) \;,  \]
where we used \eqref{131}.
\end{proof}
\begin{cor}\label{c2.1}
For any element $\wT_\SS\in \RR_\SS$ satisfying~\eqref{Z1} we have 
\be\label{2.5}\tr([\SS], H_P^1(\G, \V)) + \tr(\wT_\SS,\cc) =\tr (\wT_\SS, \W). \ee   
\end{cor}
\begin{proof}
This is immediate from~\eqref{es}, once we show that the coboundary subspace 
  $\cc$ is also preserved by $\wT_\SS$. Indeed, if $P|1-T=0$, by  \eqref{Z1} we have 
  \be \label{2.3} P|(1-S)|\wT_\SS=P| T_{\SS}^\infty |(1-S)\, \ee
and since $T_\SS^\infty(1-T)\in (1-T)\RR_n$ we also have $P| T_\SS^\infty|1-T=0$.
\end{proof}
\begin{remark}\label{r3.2}
The previous proof shows that the following exact sequence is Hecke-equivariant
\[
0\longrightarrow \V^{\G} \longrightarrow \D \xrightarrow{\;P\mapsto P|1-S\;} \cc \longrightarrow 0\;,
\]
where  $\D:=\{P\in \V\;:\; P|1-T=0 \}\simeq H^0(\G_{\infty}, \V)$, and the 
Hecke action is by $\wT_\SS$ on $\cc$ and by $T_{\SS}^\infty$ on the first two terms. 
Therefore we have $\tr(\wT_\SS,\cc)= \tr(T_{\SS}^\infty,\D)-\tr(T_{\SS}^\infty,\V^{\G} )$,
and one can compute explicitly the right hand side for the modules of interest. 
\end{remark}

By Corollary~\ref{c2.1}, computing the trace of $[\SS]$ on the parabolic cohomology reduces  
to computing the trace of $\wT_\SS$ on the period subspace. The latter trace is computed using an operator~$\wT_\SS$ 
satisfying an extra property introduced in~\cite{Z}:
\begin{equation}\label{Z23}\tag{B} 
\begin{cases}   \quad\qquad\wT_\SS(1+S) &\in \quad(1+U+U^2) \RR_\SS\, , \\  
                 \quad \wT_\SS (1+U+U^2) &\in\quad (1+S)\RR_\SS\, .
\end{cases}
\end{equation}
It is easy to show that there exist operators satisfying both~\eqref{Z1} 
and~\eqref{Z23}, and the main difficulty in this approach to the trace 
formula is contained in the next theorem, proved in the upcoming paper~\cite{PZ}.
Recall that $\G=\SL_2(\Z)$ in this section. 
\begin{theorem}[\cite{PZ}] \label{T2.1}
Let $\wT_\SS=\sum_{M} c(M) M\in\RR_\SS$ be any element 
satisfying~\eqref{Z1} and~\eqref{Z23}.  

\emph{(a)} For each \emph{right coset} $K\in \ov{\SS}/\ov{\G}$ we have $\sum_{M\in K} c(M)=-1$.  
   
\emph{(b)} For each conjugacy class $\X\subset \ov{\SS}$ we have 
    \be\label{Z4}\tag{C} \sum_{M\in \X} c(M)=\e(\X)\;, \ee
where $\e(\X)$ is defined in~\eqref{1.70}. 
\end{theorem}
Part (a) is easy 
to prove, and the main difficuly is to show that any element satisfying~\eqref{Z1} and~\eqref{Z23} 
also satisfies~\eqref{Z4}. Note that it is enough to produce such an operator 
for the double coset $\M_n$ of integral matrices of determinant $n$, because 
any primitive double coset $\SS=\G\ss\G$ can be scalled such that 
$\SS\subset \M_n$ for some $n$, and then the relations~$\eqref{Z1}-\eqref{Z4}$ 
for the operator $\wT_n$ associated to the double coset~$\M_n$ imply the corresponding relations 
for~$\wT_\SS$.

Before stating the main theorem of this section, we define 
$T_\SS\in \RR_\SS$ to be the sum
of a complete system of coset representatives for $\G\bsh\SS$. The operator 
$T_\SS$ acts (on the right) on the invariant space~$\V^\G$, and the action is clearly independent 
of the coset representatives chosen. 

\begin{theorem}\label{Tcoh} Let $\V$ be a $\G$-module with period subspace
$\W$, and let~$\SS$ be a double coset acting on $\V$ as in~\eqref{comp}.  
Assume that $\V$ admits a nondegenerate, $\G$-invariant pairing.
If $\wT_\SS\in \RR_\SS$ is any element satisfying~\eqref{Z1}, we have
  \[  \tr (\wT_\SS, \W)=\tr(T_{\SS} , \V^{\G}  ) 
  + \sum_{\X\subset\ov{\SS}} \tr( M_\X, \V)\; \e(\X)\;,\]  
where the sum is over $\ov{\G}$-conjugacy classes $\X$ in $\ov{\SS}$ 
with representatives  $M_\X\in \SS$.
\end{theorem}

\begin{proof} 
We use Theorem~\ref{T2.1}. The trace on $\W$ is the same for any element satisfying~\eqref{Z1}, 
and we choose $\wT_\SS$ satisfying~\eqref{Z23} and~\eqref{Z4} as well. 
Property~\eqref{Z23} implies that~$\wT_\SS$ maps the spaces $A=\ker(1+S)$ and 
$B=\ker(1+U+U^2)$ into each other, and basic linear algebra shows that 
\be \label{1.9}\tr(\wT_\SS,A\cap B)=\tr(\wT_\SS,A+B) .\ee 
We have $\W=A\cap B$, and denote $\V'=A+B$. 

From the $\G$-invariance of the pairing, we have $\ker(1-S) \subseteq A^\perp$. 
Since  $\ker(1-S)=\im(1+S)$, the nondegeneracy of the pairing implies that 
$\ker(1-S)$ and $A^\perp$ have the same dimension, hence they are equal. 
Similary $B^\perp= \ker(1-U)$, so $(A+B)^\perp=A^\perp\cap B^\perp= \V^{\G}$ 
as~$\G$ is generated by $S$~and~$U$. Therefore we have a direct sum decomposition
\be \label{2.8} \V=\V'\oplus \V^\G \,. \ee

Write $\wT_\SS=\sum_{C\in \SS/\G } R_C X_C  $, where $R_C\in \SS$ is any 
representative for the coset $C$, and $X_C\in \Q[\G]$. 
We chose the representatives $\{R_C\}$ so that they also form
a system of representatives for the left cosets $\G\backslash\SS$ \cite[Lemma 3.5]{Sh},
so that we can choose $T_\SS=\sum_C R_C$ in the statement.   

The element $\wT_\SS$ does not preserve $\V^\G$, so we decompose for $P\in \V^\G$ 
and a coset $C\in \SS/\G$:
$$P|R_C=P_C + P'_C, \text{ with $P_C\in \V^\G$, $P'_C\in \V'$.}$$ 
As $P_C\in \V^\G$, it follows by Theorem~\ref{T2.1} (a) that $P_C|X_C=-P_C$, and 
we obtain $P|R_C X_C=-P_C + P''_C$ with $P''_C=P'_C|X_C \in \V'$ 
(as $\V'=\im(1-S)+\im(1-U)$ is invariant under the action of $\Q[\G]$). Therefore
\[P|\wT_\SS= -\sum_C P_C+\sum_C P''_C, \quad P|T_\SS=\sum_C P_C+\sum_C P'_C\;.
\]
Since $T_\SS$ preserves $\V_\G$, we conclude from the direct sum 
decomposition~\eqref{2.8} that $P|T_\SS=\sum_C P_C$, 
and therefore~\eqref{1.9} and the previous relation give
\[ \tr(\wT_\SS,\W)=\tr(\wT_\SS, \V')=\tr(T_\SS,\V^\G)+\tr(\wT_\SS,\V) \,. \]
By Theorem~\ref{T2.1} (b), the last term can be written as a sum over conjugacy classes as 
in the statement of the theorem, finishing the proof. 
\end{proof}

\section{A trace formula on the space of period polynomials}\label{s2.0}  

Let $\G_1=\SL_2(\Z)$. We now specialize the $\G_1$-module of the last section
to be the induced module 
$\ind_\G^{\G_1}(\sym^{w} \C^2\otimes \chi)$, where $\G\subset \G_1$ is a 
finite index subgroup (note the change in notation from the last section). Its period subspace is the space of 
period polynomials associated with the space of cusp forms $S_{w+2}(\G,\chi)$, 
and we apply the Eichler-Shimura 
isomorphism and the Shapiro lemma to show that the trace of Hecke operators on 
the period subspace equals the trace on $M_{w+2}(\G,\chi)+S_{w+2}^c(\G,\chi)$ 
(Proposition~\ref{L2.1}). We then apply Theorem~\ref{Tcoh} to compute this trace 
in Section~\ref{s2.1}, thus proving Theorem~\ref{T1}. 

For other uses of the Eicher-Shimura isomorphism together with the Shapiro lemma 
in the study of modular forms for congruence subgroups see~\cite{Ha, He}.

\subsection{The Eichler-Shimura isomorphism}\label{s3.1}
Let $\G$ be a finite index subgroup of $\G_1$, and $\chi$ a character of $\G$ whose kernel has finite index in $\G$. 
For $w=k-2\ge 0$, we view the space $V_w$ of complex polynomials of degree $\le w$ as a $\G$-module by 
 \be\label{2.1} P|_\chi\g:= \chi(\g^{-1}) P|_{-w} \g, \text{ for } P\in V_w, \g\in\G\;,\ee
and we denote it by $V_w^\chi$ to indicate this action. 

Let $\SS\subset \M$ be a double coset such that $\SS=\SS\G \SS$, and $\SS$ is a disjoint finite union of right cosets. 
Using a function $\wc$ as in \eqref{chi}, we define 
an operation of elements $M\in \SS$ on $V_w^\chi$ by $$P|_{\chi}M=\wc(M)P|_{-w} M,$$ and this operation 
is obviously compatible as in~\eqref{comp} with the action of $\G$. Therefore we have an operator~$[\SS]$ 
acting on $H_P^1(\G, V_w^\chi)$ as in~\eqref{131}.

In order to state the Eichler-Shimura isomorphism, let $S_k^c(\G,\chi)$ be the space of anti-holomorphic cusp forms 
$\ov{S_k(\G,\ov{\chi})}$, where the bar denotes complex conjugation. Functions $g\in S_k^c(\G,\chi)$ are anti-holomorphic
on the upper half plane, and satisfy $g|^c_k \g=\chi(\g)g$ for $\g\in\G$, where 
\[ g|^c_k \g(z):=g(\g z)j(\g,\ov{z})^{-k}.\]
The operator $[\SS]$ acts on $S_k^c(\G,\chi)$ as in~\eqref{hecke},
with the action $|_k$ replaced by $|_k^c$. 

\begin{theorem} [Eichler-Shimura] We have a Hecke-equivariant isomorphism
\[S_k(\G,\chi)\oplus S_k^c(\G,\chi)\longrightarrow H_P^1(\G,V_w^\chi) 
\]
given by $(f,g)\mapsto [\vp_f]+[\vp_g^c]$ with the parabolic cocycles $\vp_f$, $\vp_g^c$ defined by 
  \[\vp_f(\g)(X)=\int_{\g^{-1}z_0}^{z_0} f(t)(t-X)^w dt,\quad 
  \vp_g^c(\g)(X)=\int_{\g^{-1}z_0}^{z_0} g(t)(\ov{t}-X)^w \ov{dt}\;,\]
where $z_0\in \H\cup \{\mathrm{cusps}\}$. 
\end{theorem}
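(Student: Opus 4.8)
The plan is to prove the classical Eichler--Shimura isomorphism, which identifies holomorphic and anti-holomorphic cusp forms with parabolic cohomology via period integrals. First I would verify that the maps $f\mapsto[\vp_f]$ and $g\mapsto[\vp_g^c]$ are well-defined, i.e. that $\vp_f$ and $\vp_g^c$ are genuine parabolic cocycles valued in $V_w^\chi$. The cocycle relation $\vp_f(\g\g')=\vp_f(\g)|_\chi\g'+\vp_f(\g')$ follows from the additivity of the path integral $\int_{(\g\g')^{-1}z_0}^{z_0}=\int_{\g'^{-1}z_0}^{z_0}+\int_{\g'^{-1}(\g^{-1}z_0)}^{\g'^{-1}z_0}$, together with the transformation law $f|_k\g=\chi(\g)f$ and the change of variables $t\mapsto \g' t$ applied to the integrand $f(t)(t-X)^w\,dt$; the crucial point is that the differential $f(t)(t-X)^w\,dt$ transforms under $\g'$ precisely by the weight $|_{-w}$ action on the polynomial variable $X$, which after twisting by $\chi$ yields $|_\chi$. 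The parabolic condition is that for a parabolic $\g$ fixing a cusp $\aa$, the value $\vp_f(\g)$ lies in the image of $1-\g$; this is seen by pushing the base point $z_0$ toward the cusp $\aa$ and using the cuspidal vanishing of $f$ to show the integral converges and represents a coboundary at that cusp.

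Next I would check Hecke-equivariance: for a double coset $\SS$, the action $[\SS]$ on cusp forms defined in \eqref{hecke} must correspond under the period map to the action $[\SS]$ on cohomology defined in \eqref{131}. This is a formal manipulation matching the coset decomposition $\SS=\bigsqcup \G\ss$ against the definition of $\vp_{f|[\SS]}$, using the compatibility relation $M_K\g^{-1}=\g_K^{-1}M_{K\g^{-1}}$ and the change of variables in the period integral induced by each representative $\ss$; the determinant factor $\det\ss^{k-1}$ and the multiplier $\wc(\ss)$ in \eqref{hecke} are exactly what is needed to reconcile the two actions, since $\det\ss^{k-1}$ compensates the Jacobian of $t\mapsto \ss t$ in the weight-$k$ integrand.

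Having established that the map is a well-defined Hecke morphism, the heart is to prove it is an isomorphism. The plan is to show injectivity and then match dimensions. For injectivity I would argue that if $[\vp_f]+[\vp_g^c]$ is a coboundary, then $f$ and $g$ have vanishing periods against all cycles; by integrating against the appropriate Poincar\'e series or by a direct argument using the nondegeneracy of the Petersson pairing expressed through periods, this forces $f=g=0$. For surjectivity (equivalently, equality of dimensions) I would invoke the identification of $H_P^1(\G,V_w^\chi)$ with the cohomology $H^1$ of the local system on the compactified modular curve $\G\backslash\H^*$, together with the Hodge-type decomposition of that cohomology into holomorphic and anti-holomorphic parts, each of dimension $\dim S_k(\G,\chi)$.

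The hard part will be the surjectivity/dimension count, since the rest reduces to careful but routine integral manipulations. For a self-contained treatment I would prefer to follow the standard route of realizing $H_P^1(\G,V_w^\chi)$ via the compactly supported cohomology and the long exact sequence relating it to ordinary cohomology and the boundary (cusp) contributions, as alluded to in the discussion of Proposition~\ref{L2.1}; the parabolic condition precisely cuts out the image of compactly supported cohomology, whose dimension equals $2\dim S_k(\G,\chi)$ plus a contribution accounted for by the Eisenstein part that the parabolic condition removes. Since this is a well-known result in the literature (Eichler, Shimura, Hida, Haberland), I would cite the classical references and sketch the period-integral verification of the cocycle, parabolic, and Hecke-equivariance properties, leaving the dimension matching to the cited sources.
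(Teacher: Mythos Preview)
Your proposal is correct and in fact more detailed than the paper's own treatment: the paper simply cites Shimura's book \cite[Sec.~8]{Sh} for the proof, noting only the minor convention difference that here $\G$ acts on the right on $V_w^\chi$ and that the function $\wc$ corresponds to Shimura's character via $\wc(\ss)=\chi(\ss^\vee)$. Your outline of the cocycle, parabolic, and Hecke-equivariance verifications, followed by the dimension count via Hodge theory or the compactly supported cohomology, is precisely the classical argument that Shimura carries out, so there is no substantive divergence.
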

\begin{proof} The proof is given by Shimura in \cite[Sec.~8]{Sh}, with the difference that here
$\G$ acts on the right on $V_w^\chi$ instead of on the left in loc. cit. Shimura defines the action of the 
Hecke operator $[\SS]$ using a character $\chi$ of the semigroup generated  by $\G$ and $\SS^\vee$ inside $\wg$, 
where $\SS^\vee$ is the adjoint of~$\SS$. Our function~$\wc$ is related to such a $\chi$ by $\wc(\ss)=\chi(\ss^\vee)$ for $\ss\in\SS$.
\end{proof}

\begin{remark}\label{r3.1}
Assume that there exists $\eta\in \GL_2(\R)$ with $\eta^2=1$ and $\det \eta=-1$, 
such that 
  \[ \begin{cases}\qquad
    \eta\G\eta=\G,\quad\eta\SS\eta =\SS \\ 
    \chi(\eta\g\eta)=\chi(\g),\quad \wc(\eta\ss\eta)=\wc(\ss)\quad \text{ for }\g\in\G,\ \ss\in\SS. 
    \end{cases} \]
For example, if $\G=\G_0(N)$ we can take $\eta=\sm {-1}001$. Under this assumption, 
we have a Hecke-equivariant isomorphism 
  \[ S_k(\G,\chi)\longrightarrow S_k^c(\G,\chi),\quad f\mapsto f^*(z)=f(\eta \ov{z}) j(\eta,\ov{z})^{-k}\;,
  \] 
with inverse $g\mapsto g^*$, $g^*(z)=g(\eta \ov{z}) j(\eta,z)^{-k}$, so in this case
all the trace formulas in this paper hold with the space $S_k^c(\G,\chi)$ 
replaced by $S_k(\G,\chi)$.
\end{remark}

\subsection{The Shapiro isomorphism} 
The Shapiro lemma gives a Hecke equivariant isomorphism between the cohomology
groups $H_P^1(\G_1, \ind_{\G}^{\G_1} V_w^\chi)$ and $H_P^1(\G,V_w^\chi)$, 
and we now describe it explicitly. 

Since~$V_w$ is a $\G_1$-module as well as a $\G$-module, we will identify 
the induced module $\ind_{\G}^{\G_1} V_w^\chi$ with the space of functions 
$P: \G_1\rightarrow V_w^\chi$ such that 
  \[ P(\g A)=\chi(\g) P(A)\;,\ \ A\in \G_1, \g\in\G \;, \]
on which~$\G_1$ acts by $P|g(A)= P(Ag^{-1})|_{-w} g$. By Remark~\ref{r1}, 
the cohomology group $H_P^1(\G_1, \wV_{w}^{\G,\chi})$ does not change upon 
replacing this module with its subspace $V_w^{\G, \chi}$ on which 
$-1$ acts trivially: 
  $$V_w^{\G, \chi}:=\left\{P:\G_1\rightarrow V_w \left|
  \begin{matrix}P(-A)=(-1)^w P(A)\\
     P(\g A)=\chi(\g)P(A)
  \end{matrix} \ , \text{ for } \g\in \G, A\in \G_1\right.\right\}.$$  
  
We make the following assumption on the double coset~$\SS$.

\begin{hypo}\label{eq_star} The map 
$$\G\backslash\SS\longrightarrow \G_1\backslash\G_1\SS,\quad   \G \sigma \mapsto
 \G_1\sigma  $$  
is bijective,  or equivalently $|\G\backslash\SS|= |\G_1\backslash\G_1\SS|$.
\end{hypo}
This assumption is satisfied by the double cosets giving the usual Hecke 
and Atkin-Lehner operators for the congruence subgroups $\G_1(N)$ and 
$\G_0(N)$. It is related to the notion of compatible Hecke pairs in~\cite{AS}. 

Under Assumption \ref{eq_star}, we define an action of elements $M\in \M$ on 
$V_w^{\G,\chi}$, which for $\chi=\bf{1}$ is the same as in \cite[Sec.~5]{PP}.
For $A\in\G_1$ with $M A^{-1}\in \G_1 \SS$, let $A_M\in\G_1$, $M_A\in\SS$ such that 
$M A^{-1} =A_M^{-1} M_A$ and define:
\begin{equation}\label{eq_act1}
P|_{\SS}M(A)=\begin{cases} \wc(M_A) P(A_M)|_{-w}M  
 & \text{ if }  M A^{-1}\in \G_1 \SS\\
        0 & \text{ if } M A^{-1}\notin \G_1 \SS\;.
         \end{cases}
\end{equation}
By Assumption \ref{eq_star} and \eqref{chi}, the definition does not 
depend on the decomposition $M A^{-1} =A_M^{-1} M_A$. This is not a 
proper action of the semigroup $\M$, but it is compatible with the 
action of~$\G_1$ as in~\eqref{comp}. 

Formula~\eqref{eq_act1} defines an action of the $\G_1$-double coset~$\G_1\SS\G_1$ 
on $V_w^{\G,\chi}$, which is compatible with the action of $\G_1$ as in~\eqref{comp}. 

\begin{theorem}[Shapiro Isomorphism] For $\SS$ a double coset 
satisfying Assumption~\ref{eq_star}, we have a Hecke-equivariant isomorphism
 \be \label{shap}  H_P^1(\G_1, V_w^{\G,\chi})\simeq H_P^1(\G,V_w^\chi)\;, \ee
with $[\SS]$ acting on the right side, and $[\G_1\SS \G_1]$ acting on 
the left side by~\eqref{eq_act1}.
\end{theorem}
\begin{proof} The isomorphism is given on cocycles by
$[\vp]\mapsto [\vp']$, where $\vp'(\g)=\vp(\g)(1)$, and the Hecke-equivariance
is easily verified. See also \cite[Lemma 1.1.4]{AS}.
\end{proof}

\subsection{Trace on period polynomials}

For $\V=\V_w^{\G,\chi}$, we denote by $W_w^{\G, \chi}$ the period 
subspace defined in Section~\ref{s2}. Let $\SS_1:=\G_1 \SS \G_1$,
and assume that $\SS\subset \M_n$. An operator $\wT_{\SS_1}$ satisfying~\eqref{Z1} 
acts on $W_w^{\G, \chi}$ via~\eqref{eq_act1}, and its action is the same as that 
of the ``universal operator'' $\wT_n$ 
that satisfies~\eqref{Z1} for the double coset~$\M_n$, 
since matrices in $\M_n\smallsetminus \SS_1$ act trivially in~\eqref{eq_act1}. 
To emphasize that its action depends on the coset $\SS$, we denote by 
$\tr(X|_\SS \wT_n)$ the trace of $\wT_n$ (that is of $\wT_{\SS_1}$) on any 
subspace $X\subset \W_w^{\G,\chi}$ preserved by it. 

\begin{prop} \label{L2.1} 
Let $\G\subset\G_1$ be a finite index subgroup, $k=w+2\ge 2$ an integer,
$\chi$ a character of $\G$ with kernel of finite index in $\G$, and
$\SS\subset \M_n$ a double coset satisfying Assumption~\ref{eq_star}. 
For any $\wT_n \in \RR_n$ satisfying~\eqref{Z1}, we have 
  $$\tr(W_w^{\G,\chi}|_\SS \wT_n)=\tr([\SS], M_k(\G,\chi)+ S_k^c(\G,\chi))\;.$$ 
\end{prop}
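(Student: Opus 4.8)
The plan is to separate the period polynomial space into its cuspidal and Eisenstein contributions and to match each with the corresponding trace of $[\SS]$. The starting point is the Hecke-equivariant identity \eqref{8}, which holds for any $\wT_n$ satisfying \eqref{Z1} (it rests on the Eichler--Shimura isomorphism \eqref{ES}, and hence on Assumption~\ref{ass3}) and reads
\[
\tr(W_w^{\G,\chi}|_\SS\wT_n)=2\,\tr([\SS],S_k(\G,\chi))+\tr(C_w^{\G,\chi}|_\SS\wT_n)\;.
\]
On the other side, the Eisenstein subspace gives a Hecke-stable decomposition $M_k(\G,\chi)=S_k(\G,\chi)\oplus\E_k(\G,\chi)$, so that $\tr([\SS],M_k+S_k)=2\,\tr([\SS],S_k)+\tr([\SS],\E_k(\G,\chi))$. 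Comparing the two expressions, the proposition is equivalent to the single identity
\[
\tr(C_w^{\G,\chi}|_\SS\wT_n)=\tr([\SS],\E_k(\G,\chi))\;.
\]

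Next I would transport the coboundary trace onto the boundary data. Set $\D:=\{P\in V_w^{\G,\chi}:P|(1-T)=0\}$. Relation \eqref{2.3} shows that on a coboundary $P|(1-S)$ with $P\in\D$ the operator $\wT_n$ acts through $T_n^\infty$ acting on $\D$; thus $\wT_n$ preserves $C_w^{\G,\chi}$ and its trace there is controlled by $T_n^\infty$ on $\D$. When the invariants $(V_w^{\G,\chi})^{\G_1}$ vanish, which is exactly the case $(w,\chi)\ne(0,{\bf 1})$, the map \eqref{eD} is an isomorphism $\D\xrightarrow{\sim}C_w^{\G,\chi}$ intertwining $T_n^\infty$ with $\wT_n$, so $\tr(C_w^{\G,\chi}|_\SS\wT_n)=\tr(\D|_\SS T_n^\infty)$. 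In the remaining case $(w,\chi)=(0,{\bf 1})$ the map has one-dimensional kernel $(V_0^{\G,{\bf 1}})^{\G_1}$ spanned by the constant function, which $T_n^\infty$ also preserves; here $\tr(C_0^{\G,{\bf 1}}|_\SS\wT_n)=\tr(\D|_\SS T_n^\infty)-\tr((V_0^{\G,{\bf 1}})^{\G_1}|_\SS T_n^\infty)$, the correction reflecting that in weight two with trivial character the number of cusps exceeds $\dim\E_2(\G)$ by one.

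It then remains to identify $\tr(\D|_\SS T_n^\infty)$ with the Eisenstein trace. Since $T=\sm 1101$ acts on $V_w$ by $X\mapsto X+1$, an element $P\in\D$ is a function $P:\G_1\to V_w$ with $P(\g A)=\chi(\g)P(A)$ and $P(AT)=P(A)|_{-w}T$; restricting $P$ to representatives of $\G\backslash\G_1/\langle T\rangle$ therefore decomposes $\D$ as a direct sum indexed by the cusps of $\G$, and on each cusp the relation forces $P$ to take a constant value, which is nonzero precisely when the cusp lies in $C(\G,\chi)$. This is exactly the set indexing a natural basis of $\E_k(\G,\chi)$. The representatives $M\in M_n^\infty$ making up $T_n^\infty$ are upper triangular, so they act on this cusp data through the same combinatorics---cusp widths, the sign $\sgn(\ss)^k$, and the multiplier $\wc$---by which $[\SS]$ acts on the Eisenstein series, and carrying out this computation yields $\tr(\D|_\SS T_n^\infty)=\tr([\SS],\E_k(\G,\chi))$ (with the constant correction above when $(w,\chi)=(0,{\bf 1})$).

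The formal reductions through \eqref{8}, \eqref{2.3}, and \eqref{eD} are routine once Assumption~\ref{ass3} is in force, so I expect the main obstacle to be this last identification: pinning down the action of the upper-triangular representatives $M_n^\infty$ on $\D$ and proving that, cusp by cusp, it coincides with the Hecke action on the cusp-indexed Eisenstein basis, keeping careful track of the widths, the sign factor, the multiplier $\wc$, and the $\chi$-compatibility defining $C(\G,\chi)$, together with the separate handling of the constant function in the weight-two, trivial-character case.
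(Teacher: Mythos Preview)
Your proposal is correct and follows essentially the same route as the paper: reduce via \eqref{8} to proving $\tr(C_w^{\G,\chi}|_\SS\wT_n)=\tr([\SS],E_k(\G,\chi))$, transfer the coboundary trace to $\D$ via \eqref{2.3} and \eqref{eD}, and then match the action of $T_n^\infty$ on the cusp-indexed basis of $\D$ with the Hecke action on Eisenstein series (this explicit comparison is carried out in the paper as Theorem~\ref{p6.1}, including the separate treatment of $(w,\chi)=(0,{\bf 1})$). The paper also records a second, more conceptual argument via the Ash--Stevens identification $W_w^{\G,\chi}\simeq \Symb_\G(V_w^\chi)\simeq H^1_c(\G\backslash\H,\widetilde{V}_w^\chi)$, which bypasses the explicit cusp computation altogether.
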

\begin{proof} By \eqref{2.5} and the Eichler-Shimura isomorphism
combined with the Shapiro lemma we have
\be \label{8}
\tr(W_w^{\G,\chi} |_\SS \wT_n)= \tr([\SS], S_{w+2}(\G,\chi)+S_{w+2}^c(\G,\chi))+\tr( C_w^{\G,\chi} |_\SS \wT_n)\;,
\ee
where $C_w^{\G,\chi}$ is the coboundary subspace defined by~\eqref{es}.
Therefore it is enough to show that 
$\tr(C_w^{\G,\chi}|_\SS \wT_n)=\tr([\SS], E_k(\G,\chi))$, where 
$E_k(\G,\chi)\subset M_k(\G,\chi)$ is the Eisenstein subspace. This can be 
shown by computing explicitly the left side, using Remark~\ref{r3.2}, 
and by comparing the result with the formula for the Eisenstein trace, which
is easily computed. For brevity we omit this computation,
and refer to our arXiv preprint 1408.4998v2 for the details.

A more conceptual proof is provided by the theory of modular symbols of Ash and 
Stevens. The space $W_w^{\G,\chi}$ is isomorphic with the space of modular 
symbols $\Symb_\G(V_w^\chi)$ defined in \cite[Sec. 4]{AS}, and the 
isomorphism is compatible with the action of Hecke operators on both sides. 
By \cite[Prop. 4.2]{AS}, we have a Hecke equivariant isomorphism between 
$\Symb_\G(V_w^\chi)$ and the compactly supported cohomology group 
$H^1_c(X_\G , \widetilde{V}_w^\chi)$ of the local system~$\widetilde{V}_w^\chi$
associated to $V_w^\chi$ on the modular surface $X_\G=\G\backslash\H$. Therefore 
we have a Hecke-equivariant isomorphism
$W_w^{\G,\chi} \simeq H^1_c(X_\G, \widetilde{V}_w^\chi),$ 
and since the latter space is Hecke isomorphic with $M_k(\G,\chi)+S_k^c(\G,\chi)$ by a version 
of the Eichler-Shimura isomorphism, the conclusion follows. 
\end{proof}

\section{Proof of Theorem~\ref{TF}}\label{s2.1}
First we show that the second version of Theorem~\ref{TF} is equivalent 
to the first. 
For $\G$ a finite index subgroup of $\G_1$, let $[M]_\G$ denote the 
$\G$-conjugacy class in $\PGL_2^+(\R)$ of the projection of a matrix 
$M\in \GL_2^+(\R)$. For fixed $M\in\SS$, the subsum in~\eqref{TFF} 
over $\G$-conjugacy classes $\X\subset [M]_{\G_1}$ equals
\be\label{1.5}
\sum_{\substack{A\in \ov{\G}\backslash\ov{\G}_1\\ 
 \pm AMA^{-1}\in \SS}} p_{k-2}(\tr M, \det M) (\pm 1)^k\wc(\pm AMA^{-1}) 
\frac{\e_\G([AMA^{-1}]_\G)}{[\stab_{\ov{\G}_1} M : 
\stab_{\ov{A^{-1}\G A}} M]}\;,
\ee  
where the same sign is chosen in all three places, and if $-1\notin\G$ 
at most one choice of signs is possible for each $A$.\footnote{The assumption 
on $\SS$ implies that if $-1\notin\G$ then $\SS\cap -\SS=\emptyset$. See the 
remark following Lemma~\ref{L1}.} Indeed, the~$\G$-conjugacy classes contained in $[M]_{\G_1}$ are $[AMA^{-1}]_\G$, 
with $A$ running through a set of representatives for $\ov{\G}\backslash\ov{\G}_1$
such that $\pm AMA^{-1}\in \SS$; for fixed such $A$ and varying $h\in  \stab_{\G_1} M$, 
elements in the cosets~$\G A h$ give the same conjugacy class $[AMA^{-1}]_\G$, 
and the number of such distinct cosets is easily seen to equal the index 
in the denominator above. When $\stab_{\ov{\G}_1} M$ is finite, 
the fraction in the sum above equals $\e([M]_{\G_1})$, 
since  $|\stab_{\ov{A^{-1}\G A}} M|=|\stab_{\ov{\G}} AMA^{-1}|$.
Therefore formula~\eqref{TFF} implies~\eqref{TF2}, and since the reasoning 
above is reversible, the two trace formulas are equivalent. 

We now apply Theorem~\ref{Tcoh} to the $\G_1$-module $V_w^{\G,\chi}$ 
to prove the second version of Theorem~\ref{T1}. The module 
$V_w^{\G,\chi}$ admits a a $\G_1$-equivariant pairing, given by
  \[\lla P, Q\rra:=\frac{1}{[\G_1:\G]} 
  \sum_{A\in \G\backslash\G_1} \la P(A), \ov{Q(A)} \ra \;.\]
where $\la (ax+b)^w, (cx+d)^w\ra=(ad-bc)^w\;$ is the well-known 
$\SL_2(\R)$-invariant pairing on $V_w$. It is clear that the definition is independent of the system of representatives chosen in the summation, 
and this pairing is nondegenerate and $\G_1$-invariant, so the hypothesis of  
Theorem~\ref{Tcoh} is satisfied.

The space of $\G_1$-invariants $(V_w^{\G,\chi})^{\G_1}$ is trivial if 
$(w,\chi)\ne (0, {\bf 1})$, and if $w=0$ and $\chi={\bf 1}$, it is one-dimensional spanned by the constant polynomial $P_0$, with
$P_0(A)=1$ for $A\in \G_1$. In the latter case we have
\be\label{3.7}
\tr( (V_0^{\G,{\bf 1}})^{\G_1}|_\SS T_n^\infty )= P_0|_\SS T_{\SS}^\infty (I)
 =\sum_{M\in \G_1\backslash\G_1 \SS }  \wc(M_I) , \ee
where $M$ runs through a system of representatives for $\G_1\backslash\G_1 \SS$
and $M_I\in \SS$ is any element such that $M\in \G_1 M_I$. By Assumption~\ref{eq_star}, 
$M_I$ runs over a system of representatives for $\G\backslash\SS$, so the 
last sum equals $\sum_{\ss\in \G\backslash\SS} \wc(\ss)$. 

By Theorem~\ref{Tcoh} and Proposition~\ref{L2.1}, the second version of Theorem~\ref{TF} is proved once 
we compute below the trace of $M\in \M_n$ on the module  $V_w^{\G,\chi}$. \qed

\begin{lemma}\label{L1}Assume that $\chi(-1)=(-1)^k$ if $-1\in\G$. For any $M\in \M$ we have
 \[\tr(V_w^{\G,\chi}|_{\SS} M)=p_w (\tr M,\det M)\cdot \cc_{\G,\SS}^\chi(M) \;,\]
where 
  $$\cc_{\G,\SS}^\chi(M)=\sum_{\substack{A\in \ov{\G}\backslash\ov{\G}_1\\ \pm AMA^{-1}\in \SS}}
  (\pm 1)^w\wc(\pm AMA^{-1}) \;. $$
\end{lemma}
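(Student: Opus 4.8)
The plan is to compute the trace of $|_\SS M$ directly from its block form under the evaluation isomorphism. Fix representatives $A_1,\dots,A_m$ for $\ov{\G}\backslash\ov{\G}_1$. The defining relations $P(\g A)=\chi(\g)P(A)$ and $P(-A)=(-1)^wP(A)$ are compatible when $-1\in\G$ precisely because $\chi(-1)=(-1)^k=(-1)^w$, so $P\mapsto(P(A_i))_i$ identifies $V_w^{\G,\chi}$ with $\bigoplus_i V_w$. Under this identification $|_\SS M$ is an $m\times m$ array of blocks in $\mathrm{End}(V_w)$, and I would obtain $\tr(V_w^{\G,\chi}|_\SS M)$ as the sum of the traces of its diagonal blocks.

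Next I would determine when the diagonal block at $A=A_i$ is nonzero. Writing $MA^{-1}=A_M^{-1}M_A$ with $A_M\in\G_1$, $M_A\in\SS$ as in \eqref{eq_act1}, the value $P|_\SS M(A)=\wc(M_A)\,P(A_M)|_{-w}M$ depends on $P$ only through $P(A_M)$, hence only through the coset of $A_M$; thus the diagonal block is nonzero iff $A_M\in\ov{\G}A$. Since $A_M=M_A A M^{-1}$, writing $A_M=\e\g A$ (with $\e=\pm1$, $\g\in\G$) gives $M_A=\e\g\,(AMA^{-1})$ and hence $\e\,AMA^{-1}=\g^{-1}M_A\in\G\SS=\SS$; conversely, if $\e AMA^{-1}=N\in\SS$, then $MA^{-1}=\e A^{-1}N$ shows $MA^{-1}\in\G_1\SS$ and the convenient decomposition $A_M=\e A$, $M_A=N$ (legitimate since $|_\SS M$ is independent of the decomposition by \eqref{eq_star} and \eqref{chi}) puts $A_M\in\ov{\G}A$. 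So the diagonal block at $A$ is nonzero exactly when $\pm AMA^{-1}\in\SS$.

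Using the convenient decomposition $A_M=\e A$, $M_A=N=\e\,AMA^{-1}$, with $\e$ the sign making $N\in\SS$, the diagonal block is the endomorphism $v\mapsto \wc(N)\,\e^w\,(v|_{-w}M)$ of $V_w$, because $P(A_M)=P(\e A)=\e^wP(A)$ and no $\chi$-factor enters. Its trace is $\e^w\wc(N)\,\tr(V_w|_{-w}M)=\e^w\wc(N)\,p_w(\tr M,\det M)$, invoking $V_w\simeq\sym^w\C^2$ and the stated identity $\tr(\sym^w M)=p_w(\tr M,\det M)$. Recording $\e^w=(\pm1)^w$ and $N=\pm AMA^{-1}$ and summing over those $A\in\ov{\G}\backslash\ov{\G}_1$ with $\pm AMA^{-1}\in\SS$ produces $p_w(\tr M,\det M)\cdot\cc_{\G,\SS}^\chi(M)$, as claimed.

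I expect the only delicate point to be the bookkeeping of signs and the role of $-1$, not any computation. When $-1\notin\G$ the hypothesis on $\SS$ forces $\SS\cap-\SS=\emptyset$, so for each $A$ at most one sign $\e$ places $\pm AMA^{-1}$ in $\SS$ and every summand is unambiguous (and the pair $(\e,\g)$ above is then uniquely determined since $\G\cap(-\G)=\emptyset$). When $-1\in\G$ one has $\SS=-\SS$ and $\chi(-1)=(-1)^w$, so both signs occur; they contribute equally because $(-1)^w\wc(-N)=(-1)^w\chi^{-1}(-1)\wc(N)=\wc(N)$, in agreement with the stated irrelevance of the sign. This, together with confirming that the block identification uses the correct $\ov{\G}$-coset, is where I would be most careful.
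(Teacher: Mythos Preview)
Your proposal is correct and follows essentially the same approach as the paper: decompose $V_w^{\G,\chi}\simeq\bigoplus_{A\in\ov{\G}\backslash\ov{\G}_1}V_w$ via evaluation at coset representatives, identify which diagonal blocks of $|_\SS M$ are nonzero (exactly those $A$ with $\pm AMA^{-1}\in\SS$), and compute each such block's trace as $(\pm1)^w\wc(\pm AMA^{-1})\,p_w(\tr M,\det M)$. Your treatment of the sign bookkeeping is in fact more explicit than the paper's, which relegates part of it to a remark.
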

\begin{remark*} If $-1\in \G$ the signs can be chosen arbitrarily.  
If $-1\notin\G$,  Assumption~\ref{eq_star} implies that $M$ and $-M$ cannot both belong to $\SS$, so in each term 
at most one choice of signs is possible. 
\end{remark*}
\begin{proof}Let $C_\G$ be a system of representatives for $\G\backslash\G_1/\{\pm 1\}$.
We have a decomposition 
\[V_w^{\G,\chi}=\bigoplus_{A\in C_\G} V_w^{(A)}\;,
\]
where $V_w^{(A)}\simeq V_w$ is the space of $P\in V_w^{\G,\chi}$ with $P(B)=0$ 
if $A\ne B\in C_\G$. If $MA^{-1}\not\in \G_1\SS$, then $M$ maps $V_w^{(A)}$ into 
$\oplus_{B\ne A} V_w^{(B)}$; if $MA^{-1}\in \G_1\SS$, there are unique 
$A_M\in C_\G$, $M_A\in\SS$ such that $MA^{-1}=\pm A_M^{-1} M_A$ (the sign can be 
assumed $+1$ if $-1\in\G$), and 
$$P|_\SS M(A)=\wc(M_A)(\pm 1)^w P( A_M)|_{-w} M.$$ 
It follows that the space $V_w^{(A)}$ contributes to the trace only if $A_M=A$, that is $\pm AMA^{-1}\in \SS$, and its contribution is 
$(\pm 1)^w \wc(\pm AM A^{-1}) \tr(V_w|_{-w} M). $
The conclusion follows from the fact that the last trace is $p_w(\tr M,\det M)$. 
\end{proof}

\section{Proof of Theorem~\ref{T3}}\label{sec5}
Since $(\ell,n)=1$ we easily check that $\SS=\G_n\ss\G_n$ satisfies 
Assumption~\ref{eq_star}, and 
$$|\G_n\bsh\SS|=|\G_0(\ell)\bsh\G_1|=\varphi_1(\ell).$$ 

In the trace formula~\eqref{TF2}, the conjugacy classes 
$X\subset \ov{\G_1 \ss \G_1}$ with $\e (X)\ne 0$ have representatives 
$M_X\in M_\ell$  such that $|\tr M_X|\le l+1$. 
The condition $\pm AM_X A^{-1}\in \G_n\ss\G_n$ implies that 
$\pm\tr M_X\equiv \tr\ss=\ell+1 \pmod n$, and  from $n>2\ell+2$ we conclude that 
the conjugacy classes that contribute to the formula have a representative with
$\tr M_X =\tr \ss =l+1$. Trace formula~\eqref{TF2} becomes:
\[
\tr ([\G_n\ss\G_n], M_k(\G_n)+S_k(\G_n) )= 
p_{k-2}(\ell+1,\ell)\cdot\sum_{\substack{X\subset M_\ell\\ \tr M_X=\ell+1}}C_{n,\ss}(M_X) 
+\delta_{k,2}\varphi_1(\ell),
\]
where $C_{n,\ss}(M_X)= \#\{A\in\ov{\G}_n\bsh \ov{\G}_1 : 
A M_X A^{-1}\equiv \ss\! \pmod{n} \} $.
Here we used the fact that $(\ell,n)=1$ to conclude 
$\G_n\ss\G_n=\{M\in M_\ell : M\equiv \ss \pmod{n}\}$~\cite[Lemma 3.29]{Sh}. 

As representatives for $\G_1$-conjugacy classes in 
$M_\ell$ of trace $\ell+1$ we take the matrices $\sm \ell b01$ with 
$0\le b<\ell-1$, so we can assume $M_X=\sm \ell b01$. The 
equation $A M_X \equiv \ss A\! \pmod{n}$ for $A=\sm xyzt
\in \SL_2(\Z)$ is equivalent to
\[ b z\equiv 0 ,\ \ (\ell-1)z\equiv 0 ,\ \ (\ell-1)y\equiv-bt \pmod{n}.  
\]
Write $n=dn'$, $\ell-1=d s$ with $(n',s)=1$. From $d|bz$, $d|bt$ it follows 
that $d|b$, so 
\[z\equiv 0 \pmod {n'},  \ \ sy\equiv -tb/d \pmod{n'} . \]
It follows that $xt\equiv 1 \pmod {n'}$, and for every such choice of $x,t$, 
there is a unique solution $A\in \SL_2(\Z/n'\Z)$. Each such solution has
$\#\SL_2(\Z/n\Z)/\#\SL_2(\Z/n'\Z) $ lifts to  $\SL_2(\Z/n\Z)$, and since
$\#\SL_2(\Z/n\Z)/\{\pm 1\}=[\ov{\G}_1:\ov{\G}_n ]=\varphi_2(n)$  we obtain
\[ C_{n,\ss}(M_X)= \frac{\varphi(n')}{2} \cdot\frac{\varphi_2(n)}{\varphi_2(n')}, \]
independent of $b$. Since
$p_{k-2}(\ell+1,\ell)=\frac{\ell^{k-1}-1}{\ell-1}$ and there are $\ell-1$ conjugacy
classes in the sum over $X$ above, the claim follows.


\end{document}